\documentclass[10pt]{amsart}
\usepackage{amsmath,amsthm,amsfonts,amssymb,latexsym,color}
\usepackage[colorlinks=true,linkcolor=blue,citecolor=blue,urlcolor=blue]{hyperref}
\usepackage{microtype}

\newcommand{\qbinom}[2]{\left[ \genfrac{}{}{0pt}{}{#1}{#2} \right]_q}
\newcommand{\TrZ}{\operatorname{Tr}_{\mathbb Z}}

\title[The $\frac{1}{2}$-Conjecture for $q$-Binomial Coefficients]{The $\frac{1}{2}$-Conjecture for $q$-Binomial Coefficients with
       Fractional Index}

\author{Guo-Niu HAN and Huan XIONG}
\address{Universit\'e de Strasbourg, CNRS, IRMA UMR 7501, F-67000 Strasbourg, France}
\email{guoniu.han@unistra.fr}
\address{Institute for Advanced Study in Mathematics, Harbin Institute of Technology, Harbin 150001, China}
\email{huan.xiong.math@gmail.com}

\subjclass[2020]{05A20, 05A30}
\keywords{Gaussian binomial coefficients, $q$-binomial coefficients, integer traces, Puiseux series}
\date{August 7, 2026}

\newtheorem{theorem}{Theorem}[section]
\newtheorem{conjecture}[theorem]{Conjecture}
\newtheorem{proposition}[theorem]{Proposition}
\newtheorem{lemma}[theorem]{Lemma}

\theoremstyle{definition}
\newtheorem{definition}[theorem]{Definition}
\newtheorem{example}[theorem]{Example}
\theoremstyle{remark}
\newtheorem{remark}[theorem]{Remark}

\begin{document}

\begin{abstract}
For a nonnegative integer $k$ and a rational number $r\in\mathbb{Q}^+$,
we define the generalized Gaussian binomial coefficient
$\qbinom{r+k}{k} = \frac{(q^{r+1}; q)_k}{(q; q)_k}$.
When $r=a/b$ with $a,b$ coprime positive integers and $b\geq 2$,
expanding $\qbinom{r+k}{k}$ via the finite $q$-binomial theorem produces
fractional powers of $q$, so that $\qbinom{r+k}{k}$ is a \emph{Puiseux series}
in $q$ with nonnegative exponents; concretely it lies in $\mathbb{Q}[[q^{1/b}]]$.
The notion we single out is the \emph{integer trace} of this expansion, the
subseries consisting of those terms $c_r(d)\,q^d$ whose exponent $d$ is an integer,
with all fractional powers discarded. This projection is not standard, and there is
no a~priori reason for the surviving coefficients to behave coherently as $r$ varies.
Nonetheless, ordering the family by the coefficientwise partial order leads to the
\emph{$\tfrac{1}{2}$-Conjecture}: among all $r\in\mathbb{Q}^+$, the value
$r=\tfrac{1}{2}$ maximizes the integer trace, in the sense that the coefficients of
$\qbinom{1/2+k}{k}$ dominate those of $\qbinom{r+k}{k}$ coefficientwise for every $r$.
That so elementary a definition should single out $\tfrac{1}{2}$ this cleanly came as
a surprise to us. We prove the conjecture in several special cases and provide
further computational evidence.
\end{abstract}

\maketitle

\bigskip

\section{Introduction}

The \emph{Gaussian binomial coefficients}, also called \emph{$q$-binomial coefficients},
are central objects in combinatorics, number theory, and the theory of $q$-series (see, e.g.,
\cite{Andrews1998,GasperRahman1990,Stanley2011}).
For integers $n\geq k\geq 0$ they are defined by
\begin{equation}\label{eq:gaussian}
  \qbinom{n}{k}
  \;=\; \frac{(q;\,q)_n}{(q;\,q)_k\,(q;\,q)_{n-k}},
\end{equation}
where
$$
(u;q)_n :=
  \begin{cases}
    1, & \text{if } n = 0, \\
    (1-u)(1-uq) \cdots (1-uq^{n-1}), & \text{if } n \ge 1,
  \end{cases}
$$
is the standard \emph{$q$-shifted factorial} (or \emph{$q$-Pochhammer symbol})~\cite[p.~3]{GasperRahman1990}.
It is classical that $\qbinom{n}{k}$ is a polynomial in $q$ with nonnegative
integer coefficients, and that it counts the number of $k$-dimensional subspaces
of an $n$-dimensional vector space over $\mathbb{F}_q$
\cite[Proposition~1.7.2]{Stanley2011}.

By cancellation in~\eqref{eq:gaussian}, we may write
\begin{equation}\label{eq:shift-form}
  \qbinom{r+k}{k}
  \;=\; \frac{(1-q^{r+1})(1-q^{r+2})\cdots(1-q^{r+k})}{(1-q)(1-q^2)\cdots(1-q^k)}
\end{equation}
for nonnegative integers $r$ and $k$.  The right-hand side of~\eqref{eq:shift-form}
is meaningful for \emph{any} $r\in\mathbb{R}$, and in particular for rational $r$.
For example, setting $r=\tfrac{1}{2}$ gives
\[
  \qbinom{1/2+k}{k}
  \;=\;
  \frac{(1-q^{3/2})(1-q^{5/2})\cdots(1-q^{1/2+k})}
       {(1-q)(1-q^2)\cdots(1-q^k)},
\]
which is a well-defined power series in $q^{1/2}$.

\medskip

In this paper, we study the extension of~\eqref{eq:gaussian} to rational numbers.
Following the notation of~\eqref{eq:shift-form},
we study the family
\begin{equation}
	\qbinom{r+k}{k} \;=\;  \frac{(q^{r+1};\,q)_k}{(q;\,q)_k},
  \qquad r\in\mathbb{Q}^+,
\end{equation}
which coincides with the classical Gaussian binomial coefficient when $r$ is a positive integer.
For $r=a/b$ with $a,b$ coprime positive integers and $b\geq 2$,
expanding $(q^{r+1};\,q)_k$ via the finite $q$-binomial
theorem~\cite[Theorem~3.3]{Andrews1998} produces fractional powers of $q$, so
$\qbinom{r+k}{k}$ is a \emph{Puiseux series} in $q$ with nonnegative exponents;
concretely it lies in $\mathbb{Q}[[q^{1/b}]]$.
Recall that the \emph{ring of Puiseux series} (with nonnegative exponents) over
$\mathbb{Q}$ is
\begin{equation}\label{eq:puiseux-ring}
  \mathcal{P} \;:=\; \bigcup_{b\geq 1}\mathbb{Q}[[q^{1/b}]],
\end{equation}
the set of formal series in $q$ whose exponents are nonnegative rationals admitting
a common denominator.
As $r=a/b$ ranges over $\mathbb{Q}^+$ the denominator $b$ is unbounded, so no single
$\mathbb{Q}[[q^{1/b}]]$ contains the entire family $\{\qbinom{r+k}{k}\}_{r\in\mathbb{Q}^+}$;
the ring $\mathcal{P}$ provides a common ambient space for all of them.

\medskip
Once $\qbinom{r+k}{k}$ is viewed inside $\mathcal{P}$, a natural question is what to
do with its fractional powers. For
\[
  A(q)=\sum_{\alpha\in\mathbb{Q}_{\geq 0}}a_\alpha q^\alpha\in\mathcal P,
\]
where the exponents occurring in $A$ have a common denominator, define the
\emph{integer-trace projection} by
\begin{equation}\label{eq:trace-definition}
  \TrZ A(q):=\sum_{d\in\mathbb Z_{\geq0}}a_dq^d\in\mathbb Q[[q]].
\end{equation}
Thus $\TrZ$ discards every nonintegral power.  We also write
$A(q)\big|_{\mathbb Z}:=\TrZ A(q)$.

We use the usual coefficientwise partial order on all of $\mathbb Q[[q]]$, not
only on series whose coefficients are nonnegative.  For $F,G\in\mathcal P$, write
\begin{equation}\label{eq:trace-order-definition}
  F\geq_q G
  \quad\Longleftrightarrow\quad
  \TrZ(F-G)\in\mathbb Q_{\geq0}[[q]].
\end{equation}
Equivalently, every coefficient of $\TrZ F$ is at least the corresponding
coefficient of $\TrZ G$.  This is a preorder on $\mathcal P$ and descends to the
usual coefficientwise partial order on the integer traces in $\mathbb Q[[q]]$.
We write $F=_qG$ when $\TrZ F=\TrZ G$.
We are not aware of this projection having been studied before. Since the
integer-exponent terms arise from cancellations among fractional powers, one might
expect the surviving coefficients to carry little structure as $r$ varies.

The projection is linear but is not multiplicative in general.  We shall use the
following elementary compatibility whenever one factor has only integral exponents.

\begin{lemma}[Integral-factor rule]\label{lem:trace-factor}
For $A(q)\in\mathcal P$ and $B(q)\in\mathbb Q[[q]]$,
\begin{equation}\label{eq:trace-factor}
  \TrZ\bigl(A(q)B(q)\bigr)=\bigl(\TrZ A(q)\bigr)B(q).
\end{equation}
\end{lemma}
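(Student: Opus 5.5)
The plan is to split $A$ into its integral and nonintegral parts and check that multiplying by $B$ keeps the two parts separate. Since $A\in\mathcal P$, there is some $b\geq1$ with $A\in\mathbb Q[[q^{1/b}]]$. Write
\[
A(q)=\sum_{j\ge0}a_{j/b}\,q^{j/b}.
\]
We sort the exponents by the residue of $j$ modulo $b$. This gives $A=A_0+A_1$, where $A_0=\sum_{b\mid j}a_{j/b}q^{j/b}=\TrZ A$ and $A_1$ collects the terms with $b\nmid j$. By definition $A_0\in\mathbb Q[[q]]$, and every exponent in $A_1$ lies in $\frac1b\mathbb Z_{\ge0}\setminus\mathbb Z$.

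Next, $\TrZ$ is linear. This is immediate from \eqref{eq:trace-definition}, because the coefficient at each integer exponent $d$ depends linearly on the series. Here all series live in the single ring $\mathbb Q[[q^{1/b}]]$, and $B\in\mathbb Q[[q]]\subset\mathbb Q[[q^{1/b}]]$, so $AB$ is a well-defined element of that ring. Linearity therefore gives
\[
\TrZ(AB)=\TrZ(A_0B)+\TrZ(A_1B).
\]
The first term is $A_0B$ itself, since that product lies in $\mathbb Q[[q]]$ and $\TrZ$ is the identity on $\mathbb Q[[q]]$. It remains to show that $\TrZ(A_1B)=0$.

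Consider any monomial $q^{j/b}\cdot q^{m}$ coming from $A_1B$, where $b\nmid j$ and $m\in\mathbb Z_{\ge0}$. Its exponent is $(j+mb)/b$, and $j+mb\equiv j\not\equiv 0\pmod b$, so the exponent is never an integer. The coefficient of $q^{n/b}$ in $A_1B$ is the finite sum $\sum_{j+mb=n}a_{j/b}\,b_m$. This sum is empty whenever $b\mid n$, because every $j$ occurring in $A_1$ satisfies $b\nmid j$. Hence $A_1B$ has no integral exponents, and so $\TrZ(A_1B)=0$. Combining the two terms gives $\TrZ(AB)=(\TrZ A)B$.

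There is no real obstacle in this argument. The only point needing care is to work inside one ring $\mathbb Q[[q^{1/b}]]$, so that products and coefficient extraction are well defined as finite sums; that is why $b$ is fixed at the start.
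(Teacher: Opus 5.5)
Your proof is correct and uses the same idea as the paper: multiplying by an integer power $q^m$ from $B$ preserves the fractional part of each exponent of $A$, so only $\TrZ A$ contributes integral exponents to $AB$. Your split $A=A_0+A_1$ inside a fixed ring $\mathbb Q[[q^{1/b}]]$ simply writes out the paper's one-line Cauchy-product argument in more detail.
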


\begin{proof}
Multiplication by a monomial $q^j$ with $j\in\mathbb Z_{\geq0}$ preserves the
fractional part of every exponent of $A(q)$.  Consequently, no nonintegral
exponent of $A(q)$ can become integral after multiplication by a term of $B(q)$.
Taking the Cauchy product therefore gives~\eqref{eq:trace-factor} coefficientwise.
\end{proof}

Here is a small example showing both the support selection and the possible signs.

\begin{example}\label{ex:k4-traces}
Let $D_4(q)=(1-q)(1-q^2)(1-q^3)(1-q^4)$.  Direct expansion gives
\begin{align*}
 \TrZ\qbinom{1/2+4}{4}
   &=\frac{1+q^4+q^5+2q^6+q^7+q^8+q^{12}}{D_4(q)},\\
 \TrZ\qbinom{1/3+4}{4}
   &=\frac{1-q^7-q^8-q^9-q^{10}}{D_4(q)},\\
 \TrZ\qbinom{1/4+4}{4}
   &=\frac{1+q^{11}}{D_4(q)}.
\end{align*}
For denominator $2$, the contributing indices are $0,2,4$ and all signs are
positive.  For denominator $3$, the index $3$ contributes with a negative sign;
for denominator $4$, only $0$ and $4$ contribute, both positively but with
sparser support.  This is the basic pattern behind the conjecture.
\end{example}

\medskip
Extensive computation shows otherwise. Among all positive rational parameters $r$,
a single value, $r=\tfrac{1}{2}$, dominates every other in the coefficientwise
partial order on integer traces. We call this statement the
\emph{$\tfrac{1}{2}$-Conjecture} (Conjecture~\ref{conj:main}). It is striking that
so elementary an operation as deleting fractional powers should pick out
$\tfrac{1}{2}$ in this way.

\medskip
The paper is organized as follows.
Section~\ref{sec:conjecture} introduces the $\tfrac{1}{2}$-Conjecture
(Conjecture~\ref{conj:main}) and develops the key structural observation that all
integer-trace contributions from $\qbinom{1/2+k}{k}$ are nonnegative.
Section~\ref{sec:order-props} collects the basic properties of the integer trace
order $\geq_q$ used throughout the proofs.
Section~\ref{sec:smallk} proves the conjecture for small values $k=1,2,3$ by direct
computation.
Section~\ref{sec:partial} establishes the conjecture for the two infinite families
$r=m+\tfrac{1}{2}$ (half-integers) and $r=m$ (positive integers).
Finally, Section~\ref{sec:evidence} presents computer-verified evidence for $r=1/4$
and $r=1/3$, together with stronger auxiliary conjectures on the sign of the
numerator polynomials $H_k(q)$ and $U_k(q)$.

\section{The \texorpdfstring{$\tfrac{1}{2}$}{1/2}-conjecture}\label{sec:conjecture}
Let $k$ be a nonnegative integer.
We begin with the finite $q$-binomial theorem
(see, e.g., \cite[Theorem~3.3]{Andrews1998}):
\begin{equation}\label{eq:qbinom-thm}
  (u;\,q)_k \;=\; \sum_{s=0}^{k} (-1)^s \qbinom{k}{s} q^{\binom{s}{2}} u^s.
\end{equation}
Applying~\eqref{eq:qbinom-thm} with $u=q^{r+1}$ gives
\begin{equation}\label{eq:numerator}
  (q^{r+1};\,q)_k
  \;=\; \sum_{s=0}^{k} (-1)^s \qbinom{k}{s} q^{E(s,r)},
\end{equation}
where
\begin{equation}\label{eq:exponent}
  E(s,r) \;=\; \binom{s}{2} + (r+1)s
           \;=\; \frac{s(s+1)}{2} + rs.
\end{equation}

\begin{definition}\label{def:support}
For $r\in\mathbb{Q}^+$, the \emph{integer support} is
\[
  S_r \;:=\; \bigl\{ s\in\{0,1,\dots,k\} \;\big|\; E(s,r)\in\mathbb{Z} \bigr\}.
\]
\end{definition}

The integer trace of $\qbinom{r+k}{k}$ is determined entirely by the terms indexed by $S_r$,
since only those contribute integer powers of $q$ to the numerator.  More precisely,
the inverse denominator $(q;q)_k^{-1}$ belongs to $\mathbb Q[[q]]$, so the
integral-factor rule gives
\begin{equation}\label{eq:trace-through-denominator}
 \TrZ\qbinom{r+k}{k}
 =\frac{\TrZ\bigl((q^{r+1};q)_k\bigr)}{(q;q)_k}.
\end{equation}

\medskip

For example, when $r=1/2$,
the exponent formula~\eqref{eq:exponent} gives
\begin{equation}
  E\!\left(s,\tfrac{1}{2}\right)
  \;=\; \frac{s(s+1)}{2} + \frac{s}{2}
  \;=\; \frac{s^2+2s}{2}
  \;=\; \frac{s(s+2)}{2}.
\end{equation}
This is an integer if and only if $s(s+2)\equiv 0\pmod{2}$, which holds precisely
when $s$ is even.  Hence
\begin{equation}
	S_{1/2} \;=\; \{ 0, 2, 4, \ldots, 2\lfloor k/2\rfloor \}.
\end{equation}
Crucially, $(-1)^s=+1$ for every $s\in S_{1/2}$, so \emph{every contribution to
the integer trace of $\qbinom{1/2+k}{k}$ is nonnegative}.

\medskip

For the general case $r=a/b$ with $a,b$ coprime positive integers and $b\geq 2$,
since $s(s+1)/2$ is always an integer, the condition $E(s,a/b)\in\mathbb{Z}$
reduces to $as/b\in\mathbb{Z}$, and by $\gcd(a,b)=1$ this is equivalent to
$b\mid s$.  Hence
\begin{equation}
  S_{a/b} \;=\; \bigl\{0,\,b,\,2b,\,\ldots,\,b\lfloor k/b\rfloor\bigr\}.
\end{equation}
The integrality condition $b\mid s$ is periodic in $s$ with minimal period $b$.

Two regimes arise according to the parity of $b$:
\begin{itemize}
  \item If $b$ is odd (so $b\geq 3$), then $S_{a/b}$ contains odd values of
        $s$ (e.g.\ $s=b$), so the signs $(-1)^s$ in~\eqref{eq:numerator}
        alternate and the integer-trace numerator has both positive and
        negative contributions.
  \item If $b$ is even, then every $s\in S_{a/b}$ is a multiple of $b$ and
        hence even, so $(-1)^s=+1$ and the integer-trace numerator has
        nonnegative coefficients, just as for $r=\tfrac{1}{2}$.
\end{itemize}
Among all $b\geq 2$, the choice $b=2$ minimizes $b$ and so yields the
densest integer support: $S_{1/2}=\{0,2,4,\ldots\}\cap[0,k]$ has
$\lfloor k/2\rfloor+1$ elements, the maximum among all $S_{a/b}$.
This combination of nonnegative contributions and maximal support is what
motivates Conjecture~\ref{conj:main}.

\medskip

Write the integer trace expansion as
\begin{equation}
  \qbinom{r+k}{k}\big|_{\mathbb{Z}}
  \;=\; \sum_{d\geq 0} c_r(d)\,q^d.
\end{equation}
The denominator $(q;\,q)_k^{-1}$ has the well-known expansion
\cite[Theorem~1.1]{Andrews1998}:
\begin{equation}\label{eq:partition}
  \frac{1}{(q;\,q)_k}
  \;=\; \sum_{n\geq 0} p_k(n)\,q^n,
\end{equation}
where $p_k(n)\geq 0$ counts the number of partitions of $n$ into at most $k$ parts
(equivalently, into parts each at most $k$).
See also \cite[Eq. (1.76)]{Stanley2011}.

Combining~\eqref{eq:numerator} and~\eqref{eq:partition}, and applying
Lemma~\ref{lem:trace-factor} as in~\eqref{eq:trace-through-denominator}, yields the
convolution formula
\begin{equation}\label{eq:convolution}
  c_r(d)
  \;=\; \sum_{s\in S_r} (-1)^s
        \sum_{n\geq 0} \beta_{k,s}(n)\,p_k\!\bigl(d - n - E(s,r)\bigr),
\end{equation}
where $\beta_{k,s}(n)$ denotes the coefficient of $q^n$ in $\qbinom{k}{s}$.
Since $\qbinom{k}{s}$ is a polynomial in $q$ with nonnegative integer coefficients,
we have $\beta_{k,s}(n)\geq 0$ for all $n$, and likewise $p_k(n)\geq 0$.
The only possible source of negativity in~\eqref{eq:convolution} is therefore the
sign $(-1)^s$.

\medskip

This analysis leads to our main conjecture.

\medskip

\begin{conjecture}[The $\tfrac{1}{2}$-Conjecture]\label{conj:main}
For every $r\in\mathbb{Q}^+$ and every integer $k\geq 1$,
\begin{equation}
 \qbinom{1/2+k}{k} \geq_q \qbinom{r+k}{k}.
\end{equation}
\end{conjecture}

The conjecture asserts that $r=\tfrac{1}{2}$ is a \emph{global maximizer} of the
integer traces in the coefficientwise partial order on $\mathbb Q[[q]]$.

\medskip
The following special case deserves individual attention.

\begin{conjecture}[$\tfrac{1}{2}$ vs.\ $\tfrac{1}{4}$]\label{conj:1/4}
For all $k\geq 1$,
\[
 \qbinom{1/2+k}{k} \;\geq_q\;  \qbinom{1/4+k}{k} .
\]
\end{conjecture}

Conjecture~\ref{conj:1/4} is a special case of Conjecture~\ref{conj:main} with
$r=1/4$, but is singled out because of the proximity of $1/4$ to $1/2$ and because
the computation of both sides at the level of numerator polynomials
already exhibits rich combinatorial structure (see Section~\ref{sec:evidence}).

\section{Properties of the integer trace order}\label{sec:order-props}

The following properties of the integer trace order $\geq_q$ on $\mathcal P$ are
elementary but important.  We record them for use in the proofs of
Sections~\ref{sec:partial} and~\ref{sec:evidence}.

\begin{lemma}\label{lem:order-props}
For series in $\mathcal P$, the integer trace order satisfies the following properties.
\begin{enumerate}
	\item\label{it:equiv}  $F(q)\geq_q G(q)$ if and only if  $F(q)-G(q) \geq_q 0$.
	\item If $F(q)\geq_q G(q)$ and $c\in\mathbb{Q}^+$, then
		$cF(q)\geq_q cG(q)$.
	\item\label{it:add} If $F_1(q)\geq_q G_1(q)$ and $F_2(q)\geq_q G_2(q)$, then
		$F_1(q)+F_2(q)\geq_q G_1(q)+G_2(q)$.
	\item\label{it:transi} If $F_1(q)\geq_q F_2(q)$ and $F_2(q)\geq_q F_3(q)$, then
		$F_1(q)\geq_q F_3(q)$.
\item\label{it:denom} If $F(q)\geq_q G(q)$ and $k_1,k_2,\dots,k_\ell$ are positive integers,
  then
  \[
    \frac{F(q)}{(1-q^{k_1})(1-q^{k_2})\cdots(1-q^{k_\ell})}\;\geq_q\;
    \frac{G(q)}{(1-q^{k_1})(1-q^{k_2})\cdots(1-q^{k_\ell})}.
  \]
\item\label{it:diff} If $n,m\in\mathbb{N}$ with $n\leq m$, then
  $\dfrac{q^n-q^m}{1-q}\geq_q 0$.

\end{enumerate}
\end{lemma}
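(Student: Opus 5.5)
All six items reduce to two facts: $\TrZ$ is $\mathbb Q$-linear, and the cone $\mathbb Q_{\geq0}[[q]]$ of series with nonnegative coefficients is closed under addition, under multiplication by positive rationals, and under multiplication by series with nonnegative coefficients. The plan is to translate each item through the definition~\eqref{eq:trace-order-definition} and use linearity of $\TrZ$.

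\emph{Items (1)–(4).}
\begin{itemize}
\item[(1)] Since $F-G-0=F-G$, the condition $\TrZ(F-G)\in\mathbb Q_{\geq0}[[q]]$ is literally the same on both sides.
\item[(2)] Linearity gives $\TrZ(cF-cG)=c\,\TrZ(F-G)$, which has nonnegative coefficients when $c>0$.
\item[(3)] We have $\TrZ\bigl((F_1+F_2)-(G_1+G_2)\bigr)=\TrZ(F_1-G_1)+\TrZ(F_2-G_2)$, a sum of two nonnegative series.
\item[(4)] Write $F_1-F_3=(F_1-F_2)+(F_2-F_3)$ and apply the same argument as in (3).
\end{itemize}

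\emph{Item (5).} Put $D(q)=\prod_{i=1}^{\ell}(1-q^{k_i})$. Then
\[
1/D(q)=\prod_{i=1}^{\ell}\sum_{j\geq0}q^{jk_i}\in\mathbb Q_{\geq0}[[q]],
\]
and in particular it has only integral exponents. Lemma~\ref{lem:trace-factor}, applied with $A=F-G$ and $B=1/D$, gives
\[
\TrZ\bigl((F-G)/D\bigr)=\TrZ(F-G)\cdot(1/D).
\]
This is a product of two series with nonnegative coefficients, so it is nonnegative. Combined with linearity of $\TrZ$, this yields the claim.

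\emph{Item (6).} For $n\leq m$, the finite geometric sum gives
\[
\frac{q^n-q^m}{1-q}=q^n+q^{n+1}+\cdots+q^{m-1}.
\]
This is a polynomial with integer exponents and coefficients $0$ or $1$; it is the empty sum $0$ when $n=m$. Hence it equals its own integer trace and lies in $\mathbb Q_{\geq0}[[q]]$.

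There is no genuine obstacle. The only point needing care is (5): we must make sure that dividing by $D$ commutes with $\TrZ$. That is exactly what Lemma~\ref{lem:trace-factor} provides, together with the nonnegativity of the coefficients of $1/D$.
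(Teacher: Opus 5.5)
Your proof is correct and follows the same argument as the paper. Items (1)--(4) come from linearity of $\TrZ$ and closure of the cone $\mathbb Q_{\geq0}[[q]]$, item (5) from the integral-factor rule (Lemma~\ref{lem:trace-factor}) applied with $B=1/D\in\mathbb Q_{\geq0}[[q]]$, and item (6) from the finite geometric sum.
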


\begin{proof}
Properties \eqref{it:equiv}--\eqref{it:transi} follow directly from the
coefficientwise order on the traces in $\mathbb Q[[q]]$.  Notice in particular
that property~\eqref{it:add} compares the sum with $G_1+G_2$.

	\eqref{it:denom} Put
\[
 B(q)=\prod_{i=1}^{\ell}\frac{1}{1-q^{k_i}}\in\mathbb Q_{\geq0}[[q]].
\]
By Lemma~\ref{lem:trace-factor},
\[
 \TrZ\bigl((F-G)B\bigr)=\TrZ(F-G)B.
\]
If $F\geq_qG$, then both factors on the right have nonnegative coefficients,
and so does their Cauchy product.  Property~\eqref{it:equiv} now gives the claim.

\eqref{it:diff} We compute
\[
  \frac{q^n - q^m}{1-q} \;=\; q^n + q^{n+1} + \cdots + q^{m-1} \geq_q 0.
\]
\end{proof}

\section{Partial results: small \texorpdfstring{$k$}{k}}\label{sec:smallk}
In this section, we prove the $\tfrac{1}{2}$-conjecture when $k$ is small.

\begin{proposition}\label{prop:smallk}
	The $\tfrac{1}{2}$-conjecture is true for $k=1,2,3$.
\end{proposition}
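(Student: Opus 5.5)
We prove Proposition~\ref{prop:smallk} by explicit case analysis, after reducing to numerators. By~\eqref{eq:trace-through-denominator}, both sides have the common denominator $(q;q)_k$, so it suffices to compute the trace numerators
\[
N_r(q):=\TrZ (q^{r+1};q)_k=\sum_{s\in S_r}(-1)^s\qbinom{k}{s}q^{E(s,r)}.
\]
We then show that
\[
\frac{N_{1/2}(q)-N_r(q)}{(q;q)_k}\geq_q 0 .
\]
Here $S_r=\{0,b,2b,\dots\}\cap[0,k]$ when $r=a/b$ with $b\ge2$, and $S_r=\{0,1,\dots,k\}$ when $r=m$ is a positive integer. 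For $k\le 3$ only a few shapes of $S_r$ occur: $b=2$, $b=3$, $b\geq 4$ (where $S_r=\{0\}$), and $b=1$. We treat them one at a time.

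The tools throughout are Lemma~\ref{lem:order-props}. Item~\eqref{it:diff} shows that $(q^n-q^m)/(1-q)$ is $\geq_q 0$ for $n\le m$. Item~\eqref{it:denom} lets us divide further by the remaining factors $1-q^j$. Lemma~\ref{lem:trace-factor} then allows multiplication by a polynomial with nonnegative integer coefficients, such as $\qbinom{3}{1}=1+q+q^2$, while preserving $\geq_q 0$. Finally, items~\eqref{it:equiv} and~\eqref{it:add} let us split the difference into pieces that are each $\geq_q 0$.

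\textbf{Case $k=1$.} We have $N_{1/2}=1$. For $b\ge2$ also $N_r=1$, so the two sides are equal. For $r=m$ we get $N_m=1-q^{m+1}$, and the difference is $q^{m+1}/(1-q)\geq_q0$.

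\textbf{Case $k=2$.} Here $N_{1/2}=1+q^4$.
\begin{itemize}
\item If $b=2$ with $a$ odd, then $N_r=1+q^{3+a}$ and the difference is $(q^4-q^{3+a})/\bigl((1-q)(1-q^2)\bigr)$. Since $a\ge1$, item~\eqref{it:diff} followed by item~\eqref{it:denom} gives $\geq_q 0$.
\item If $b\ge3$, then $N_r=1$ and the difference $q^4/(q;q)_2$ is trivially $\geq_q 0$.
\item If $r=m$, then
\[
N_m=1-(1+q)q^{m+1}+q^{2m+3},
\]
and the difference is
\[
q^4+q^{m+1}+(q^{m+2}-q^{2m+3}).
\]
The first two terms have nonnegative coefficients. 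For the last, $m+2\le 2m+3$, so dividing by $1-q$ and then by $1-q^2$ gives $\geq_q 0$.
\end{itemize}

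\textbf{Case $k=3$.} Now $S_{1/2}=\{0,2\}$ and $N_{1/2}=1+(1+q+q^2)q^4$.
\begin{itemize}
\item If $b=2$, then $N_r=1+(1+q+q^2)q^{3+a}$. The difference is $(1+q+q^2)(q^4-q^{3+a})$, handled as in the case $k=2$.
\item If $b=3$, then $S_r=\{0,3\}$ and $N_r=1-q^{6+a}$. The difference has nonnegative coefficients outright; this is exactly where the negative sign for odd $b$ helps.
\item If $b\ge4$, then $N_r=1$.
\item If $r=m$, then
\[
N_m=1-\qbinom31q^{m+1}+\qbinom32q^{2m+3}-q^{3m+6}.
\]
Using $\qbinom31=\qbinom32=1+q+q^2$, the difference regroups as
\[
(1+q+q^2)\bigl(q^4+q^{m+1}-q^{2m+3}\bigr)+q^{3m+6}.
\]
Since $m+1\le 2m+3$, the factor $(q^{m+1}-q^{2m+3})/(1-q)$ is $\geq_q 0$. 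Multiplying by $1+q+q^2$ and dividing by $(1-q^2)(1-q^3)$ keeps it $\geq_q 0$. The remaining terms have nonnegative coefficients.
\end{itemize}

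The only step needing care is the integer case $r=m$, where $S_r$ is all of $\{0,\dots,k\}$ and the numerator really alternates. The key is to pair each negative term $-\qbinom{k}{s}q^{E(s,m)}$ with an earlier positive term carrying the same $q$-binomial polynomial, so that item~\eqref{it:diff} applies. For $k\le3$ this works because of the symmetry $\qbinom{3}{1}=\qbinom{3}{2}$ and because $E(s,m)$ increases with $s$. The denominators $(1-q^2)$ and $(1-q^3)$ absorbed afterwards only help, by item~\eqref{it:denom}.
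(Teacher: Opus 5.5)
Your proposal is correct and follows essentially the paper's proof. Both arguments compute the integer-trace numerators explicitly for $k=1,2,3$, split according to the denominator of $r$ (the paper phrases this as $r\in\frac12\mathbb{N}$, $r\in\frac13\mathbb{N}$, or $r\in\mathbb{N}$), and pair each negative term with a positive term of smaller exponent via Lemma~\ref{lem:order-props}\eqref{it:diff} before dividing by the remaining factors. The differences are only cosmetic: you read off each trace from the support $S_r$ and factor out $1+q+q^2$, whereas the paper expands the difference of the two products and writes the three paired differences out separately.
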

\begin{proof}

For $k=1$ and any $r\in\mathbb{Q}^+$, we compare $\qbinom{1/2+k}{k}$ and $\qbinom{r+k}{k}$ directly:
\begin{align*}
\qbinom{1/2+k}{k} - \qbinom{r+k}{k}
  &\;=\; \frac{(1-q^{3/2})-(1-q^{r+1})}{1-q}
   \;=\; \frac{q^{r+1}-q^{3/2}}{1-q}
   \;=_q\; \frac{q^{r+1}}{1-q}.
\end{align*}
Now
$$
	q^{r+1} \;=_q\; \begin{cases}
		0,  &\text{if $r\notin \mathbb{N}$}, \\
		q^{r+1}, &\text{if $r\in \mathbb{N}$},
	\end{cases}
$$
so $q^{r+1} \geq_q 0$, and therefore $q^{r+1}/(1-q) \geq_q 0$ by Lemma~\ref{lem:order-props}\eqref{it:denom}.
\medskip

For $k=2$ and $r\in\mathbb{Q}^+$,
\begin{align*}
\qbinom{1/2+k}{k} - \qbinom{r+k}{k}
  &\;=\;
  \frac{(1-q^{3/2})(1-q^{5/2}) - (1-q^{r+1})(1-q^{r+2})}{(1-q)(1-q^2)}\\
	&= \frac{(q^{r+1}+q^{r+2}+q^4)-(q^{3/2}+q^{5/2}+q^{2r+3})}{(1-q)(1-q^{2})}\\
	&=_q \frac{q^{r+1}+q^{r+2}+q^4-q^{2r+3}}{(1-q)(1-q^{2})}.
\end{align*}
Since
$$
q^{r+1}+q^{r+2}+q^4-q^{2r+3}
\;=_q\;
	\begin{cases}
		q^4, &\text{if $r\notin \frac{1}{2}\mathbb{N}$},\\
		q^4 - q^{2r+3}, &\text{if $r\in \frac{1}{2}\mathbb{N}$ and $r\notin \mathbb{N}$},\\
		(q^{r+1}-q^{2r+3})+q^{r+2}+q^4, &\text{if $r\in \mathbb{N}$},
	\end{cases}
$$
we obtain in each case
$$
\frac{q^{r+1}+q^{r+2}+q^4-q^{2r+3}}{(1-q)(1-q^{2})}\;\geq_q\; 0
$$
by Lemma~\ref{lem:order-props}\eqref{it:diff}.
\medskip

For $k=3$ and $r\in\mathbb{Q}^+$,
\begin{align*}
& \qbinom{1/2+k}{k} - \qbinom{r+k}{k} \\
&\;=\;
	\frac{(1-q^{3/2})(1-q^{5/2})(1-q^{7/2}) - (1-q^{r+1})(1-q^{r+2})(1-q^{r+3})}{(1-q)(1-q^2)(1-q^3)}\\
&\;=\; \frac{V}{(1-q)(1-q^{2})(1-q^3)},
\end{align*}
where
$$
V \;=_q\; q^4+q^5+q^6+q^{r+1}+q^{r+2}+q^{r+3}-q^{2r+3}-q^{2r+4}-q^{2r+5}+q^{3r+6}.
$$

If $r\notin \frac{1}{2}\mathbb{N}$ and $r\notin \frac{1}{3}\mathbb{N}$, then
$V =_q q^4+q^5+q^6 \geq_q 0$.

If $r\in \frac{1}{2}\mathbb{N}$ and $r\notin \mathbb{N}$, then
$V =_q (q^4-q^{2r+3})+(q^5-q^{2r+4})+(q^6-q^{2r+5})$,
so $V/(1-q)\geq_q 0$ by Lemma~\ref{lem:order-props}\eqref{it:diff}.

If $r\in \frac{1}{3}\mathbb{N}$ and $r\notin \mathbb{N}$, then
$V =_q q^4+q^5+q^6+q^{3r+6} \geq_q 0$.

If $r\in \mathbb{N}$, then
$$
V \;=_q\; q^4+q^5+q^6+(q^{r+1}-q^{2r+3})+(q^{r+2}-q^{2r+4})+(q^{r+3}-q^{2r+5})+q^{3r+6},
$$
so $V/(1-q)\geq_q 0$ by Lemma~\ref{lem:order-props}\eqref{it:diff}.
\end{proof}

\section{Partial results: half-integers and positive integers}\label{sec:partial}

Using Lemma~\ref{lem:order-props}, we prove Conjecture~\ref{conj:main}
in the cases where $r$ is a half-integer $m+\tfrac{1}{2}$ ($m\geq 0$)
or a positive integer $m\geq 1$.

\begin{proposition}\label{prop:half-int}
The $\frac{1}{2}$-conjecture holds for half-integer parameters: for every integer $m\geq 0$,
\begin{equation}\label{eq:half-int}
 \qbinom{1/2+k}{k} \;\geq_q\; \qbinom{m+1/2+k}{k}.
\end{equation}
\end{proposition}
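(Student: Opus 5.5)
The plan is to compute both integer traces explicitly from \eqref{eq:numerator} and compare them term by term. Put $r=m+\tfrac12$. Then
\[
E(s,r)=\frac{s(s+1)}{2}+\Bigl(m+\tfrac12\Bigr)s=\frac{s(s+2)}{2}+ms .
\]
This is an integer exactly when $s$ is even, the same condition as for $r=\tfrac12$. So $S_{m+1/2}=S_{1/2}=\{0,2,\dots,2\lfloor k/2\rfloor\}$, and every sign $(-1)^s$ on this support equals $+1$.

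By \eqref{eq:trace-through-denominator}, which rests on Lemma~\ref{lem:trace-factor}, we get
\[
\TrZ\qbinom{1/2+k}{k}-\TrZ\qbinom{m+1/2+k}{k}
=\frac{1}{(q;q)_k}\sum_{\substack{0\le s\le k\\ s \text{ even}}}\qbinom{k}{s}\,q^{s(s+2)/2}\bigl(1-q^{ms}\bigr).
\]
Each summand carries the factor $1-q^{ms}$. Since $k\ge1$, the denominator contains the factor $1-q$. Cancelling it, we rewrite the difference as
\[
\frac{1}{(1-q^2)\cdots(1-q^k)}\sum_{s \text{ even}}\qbinom{k}{s}\,q^{s(s+2)/2}\bigl(1+q+\cdots+q^{ms-1}\bigr).
\]
For $s=0$ or $m=0$ the empty sum is $0$.

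Each summand is now a product of polynomials with nonnegative integer coefficients. Indeed, $\qbinom{k}{s}$ is classical, and the geometric block is the instance $n=0$, $m\mapsto ms$ of Lemma~\ref{lem:order-props}\eqref{it:diff}. So the numerator is $\geq_q 0$. Dividing by $\prod_{j=2}^k(1-q^j)$ preserves this by Lemma~\ref{lem:order-props}\eqref{it:denom}. Lemma~\ref{lem:order-props}\eqref{it:equiv} then gives \eqref{eq:half-int}.

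There is no serious obstacle. The only point needing care is to check that the two supports coincide, so that the difference pairs up termwise with the same binomial $\qbinom{k}{s}$ and the same base exponent $s(s+2)/2$. After that, the factor $1-q^{ms}$ is absorbed by the $1-q$ in $(q;q)_k$, which is exactly what the cited lemmas handle.
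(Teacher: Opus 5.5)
Your proof is correct, but it takes a different route from the paper. The paper inducts on $m$, telescoping through consecutive half-integers. For $r=m+\tfrac12$ it uses
\[
\qbinom{r+k}{k}-\qbinom{r+1+k}{k}=\frac{-q^{r+1}\,(q^{r+2};q)_{k-1}}{(q;q)_{k-1}}
\]
and expands the numerator product by choosing a subset $J\subseteq\{2,\dots,k\}$ of size $j$. The resulting exponent $(j+1)r+\text{integer}$ is integral only when $j+1$ is even, i.e.\ exactly when the sign $(-1)^{j+1}$ is $+1$, and transitivity finishes the induction. You instead compare the two traces in a single step via the expansion~\eqref{eq:numerator}. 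Because $S_{m+1/2}=S_{1/2}$, the terms pair off with the same $\qbinom{k}{s}$ and exponents differing by $ms$, and each $1-q^{ms}$ is absorbed by the factor $1-q$ of $(q;q)_k$. This avoids induction and transitivity, and it yields an explicit, manifestly nonnegative closed form
\[
\TrZ\qbinom{1/2+k}{k}-\TrZ\qbinom{m+1/2+k}{k}=\frac{1}{(q^2;q)_{k-1}}\sum_{\ell=1}^{\lfloor k/2\rfloor}\qbinom{k}{2\ell}q^{2\ell(\ell+1)}\bigl(1+q+\cdots+q^{2\ell m-1}\bigr).
\]
In effect, this verifies the prefix-sum certificate of Lemma~\ref{lem:prefix-sums} for this family uniformly in $k$. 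The paper's argument, for its part, needs neither the $q$-binomial theorem nor the support computation. It works on the Pochhammer product directly, and its parity-of-factors count is the same sign phenomenon as your observation that $s$ even implies $(-1)^s=+1$, seen from the product side.
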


\begin{proof}
We prove~\eqref{eq:half-int} by induction on $m$.  The base case $m=0$ is trivial.
For the inductive step it suffices to show
\begin{equation}
 \qbinom{m+1/2+k}{k} \;\geq_q\; \qbinom{m+3/2+k}{k}.
\end{equation}

Set $r=m+\tfrac{1}{2}$.  We compute the difference $\qbinom{r+k}{k}-\qbinom{r+1+k}{k}$ directly:
\begin{align}
  & \qbinom{r+k}{k}- \qbinom{r+1+k}{k} \notag \\
  &\;=\;
  \frac{(1-q^{r+2})\cdots(1-q^{r+k})}{(1-q)(1-q^2)\cdots(1-q^k)}
  \left[(1-q^{r+1})-(1-q^{r+1+k})\right] \notag \\
  &\;=\;
  \frac{(1-q^{r+2})\cdots(1-q^{r+k})\cdot\bigl(q^{r+1+k}-q^{r+1}\bigr)}
       {(1-q)(1-q^2)\cdots(1-q^k)} \notag \\
  &\;=\;
	\frac{(-q^{r+1})\,(1-q^{r+2})\cdots(1-q^{r+k})}
       {(1-q)(1-q^2)\cdots(1-q^{k-1})}. \label{eq:diff-half}
\end{align}
We claim that the right-hand side of~\eqref{eq:diff-half} is $\geq_q 0$.

Since $r=m+\tfrac{1}{2}$ is a half-integer, the exponents $r+1,r+2,\dots,r+k$
are half-integers as well.  Expand
$(-q^{r+1})(1-q^{r+2})\cdots(1-q^{r+k})$ by choosing a subset
$J\subseteq\{2,3,\dots,k\}$ of the nonconstant terms.  If $|J|=j$, the
corresponding monomial has sign $(-1)^{j+1}$ and exponent
\[
  (r+1)+\sum_{i\in J}(r+i)
  =(j+1)r+I_J,
  \qquad I_J:=1+\sum_{i\in J}i\in\mathbb Z.
\]
Because $r=m+\tfrac{1}{2}$, this exponent is integral exactly when $j+1$ is
even (equivalently, $j$ is odd).  Every monomial that survives the integer-trace
projection therefore has sign $(-1)^{j+1}=+1$.  After collecting equal powers,
all coefficients in the integer trace remain nonnegative.
Therefore
$$
(-q^{r+1})(1-q^{r+2})\cdots(1-q^{r+k}) \;\geq_q\; 0,
$$
and Lemma~\ref{lem:order-props}\eqref{it:denom}, applied with denominator
$(1-q)(1-q^2)\cdots(1-q^{k-1})$, yields
$\qbinom{r+k}{k}-\qbinom{r+1+k}{k} \geq_q 0$.

By transitivity and induction, $\qbinom{1/2+k}{k}\geq_q \qbinom{m+1/2+k}{k}$ for all $m\geq 0$.
\end{proof}

\begin{proposition}\label{prop:int}
The $\frac{1}{2}$-conjecture holds for positive integer parameters: for every positive integer $m$,
\begin{equation}\label{eq:int}
 \qbinom{1/2+k}{k} \;\geq_q\; \qbinom{m+k}{k}.
\end{equation}
\end{proposition}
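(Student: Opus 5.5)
The plan is to pass through the intermediate series $1/(q;q)_k$, which lies between the two sides, and use transitivity, Lemma~\ref{lem:order-props}\eqref{it:transi}. That is, I would show
\[
  \qbinom{1/2+k}{k}\;\geq_q\;\frac{1}{(q;q)_k}\;\geq_q\;\qbinom{m+k}{k}.
\]
This is motivated by the computation in the proof of Proposition~\ref{prop:half-int}, specialized to integer $r$. There
\[
  \qbinom{r+k}{k}-\qbinom{r+1+k}{k}=-q^{r+1}\qbinom{r+k}{k-1},
\]
so the integer-parameter coefficients \emph{increase} with $m$. An induction on $m$ starting from $\tfrac12$ is therefore hopeless. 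The natural target is instead the limit $m\to\infty$, which is $1/(q;q)_k$.

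\emph{First inequality.} By \eqref{eq:trace-through-denominator} and the computation of $S_{1/2}$ in Section~\ref{sec:conjecture},
\[
  \TrZ\bigl((q^{3/2};q)_k\bigr)=\sum_{s\ \mathrm{even},\,0\le s\le k}\qbinom{k}{s}q^{s(s+2)/2}.
\]
The $s=0$ term is $1$, and every other term has nonnegative coefficients because all signs $(-1)^s$ are $+1$. Hence $(q^{3/2};q)_k-1\geq_q 0$. Applying Lemma~\ref{lem:order-props}\eqref{it:denom} with denominator $(1-q)\cdots(1-q^k)$ then gives $\qbinom{1/2+k}{k}\geq_q 1/(q;q)_k$.

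\emph{Second inequality.} Everything here is integral, so $\geq_q$ is the ordinary coefficientwise order. By the classical interpretation, the coefficient of $q^n$ in $\qbinom{m+k}{k}$ counts partitions of $n$ fitting in a $k\times m$ box, i.e.\ at most $k$ parts, each at most $m$. By \eqref{eq:partition}, the coefficient of $q^n$ in $1/(q;q)_k$ is $p_k(n)$, which counts all partitions of $n$ into at most $k$ parts. The first set is a subset of the second, so the difference has nonnegative coefficients.

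If one prefers to avoid the combinatorial interpretation, I would instead write
\[
  \frac{1}{(q;q)_k}-\qbinom{m+k}{k}=\sum_{j\ge m}\Bigl(\qbinom{j+1+k}{k}-\qbinom{j+k}{k}\Bigr)=\sum_{j\ge m}q^{j+1}\qbinom{j+k}{k-1},
\]
using the displayed identity above; this converges $q$-adically and visibly has nonnegative coefficients. I expect no genuine obstacle here. The only care needed is justifying the telescoping limit in $\mathbb Q[[q]]$, which holds because $\qbinom{j+k}{k}\to 1/(q;q)_k$ coefficientwise, the coefficient of $q^n$ being stable once $j\ge n$. Transitivity then finishes the proof.
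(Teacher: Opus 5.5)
Your proof is correct and is essentially the paper's own argument. The paper also shows $\TrZ\qbinom{1/2+k}{k}\geq_q 1/(q;q)_k$ by keeping only the $s=0$ term of the nonnegative trace numerator and applying Lemma~\ref{lem:order-props}\eqref{it:denom}, and then shows $1/(q;q)_k\geq_q\qbinom{m+k}{k}$ by including box-constrained partitions among all partitions with bounded part size (the conjugate of your ``at most $k$ parts'' phrasing); your telescoping alternative via $\qbinom{j+1+k}{k}-\qbinom{j+k}{k}=q^{j+1}\qbinom{j+k}{k-1}$ is also valid but not needed.
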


\begin{proof}
We use the integer partition interpretations of the two sides.

\medskip
\textit{Step 1: A lower bound for $\qbinom{1/2+k}{k}$.}
By definition,
\[
  \qbinom{1/2+k}{k}
  \;=\;
  \frac{(1-q^{3/2})(1-q^{5/2})\cdots(1-q^{1/2+k})}{(1-q)(1-q^2)\cdots(1-q^k)}.
\]
To extract the integer trace, we expand the numerator using~\eqref{eq:numerator}
with $r=\tfrac{1}{2}$:
\[
  (1-q^{3/2})\cdots(1-q^{1/2+k})
  \;=\; \sum_{s=0}^{k}(-1)^s q^{s(s+2)/2}\qbinom{k}{s}.
\]
Only even values of $s$ contribute integer powers, giving
\[
  (1-q^{3/2})\cdots(1-q^{1/2+k})\bigr|_{\mathbb{Z}}
  \;=\; \sum_{\ell=0}^{\lfloor k/2\rfloor} q^{2\ell(\ell+1)}\qbinom{k}{2\ell}
  \;=\; 1 + \sum_{\ell=1}^{\lfloor k/2\rfloor} q^{2\ell(\ell+1)}\qbinom{k}{2\ell}
  \;\geq_q\; 1.
\]
Therefore, by Lemma~\ref{lem:order-props}\eqref{it:denom},
\begin{equation}\label{eq:lower-half}
 \TrZ\qbinom{1/2+k}{k}
  \;=\;
  \frac{(1-q^{3/2})\cdots(1-q^{1/2+k})\bigr|_{\mathbb{Z}}}
       {(q;\,q)_k}
  \;\geq_q\;
  \frac{1}{(q;\,q)_k}.
\end{equation}
Here the equality is precisely Lemma~\ref{lem:trace-factor}, applied to the
inverse denominator $(q;q)_k^{-1}\in\mathbb Q[[q]]$.

\medskip
\textit{Step 2: An upper bound for $\qbinom{m+k}{k}$.}
By the classical combinatorial interpretation of the $q$-binomial coefficient
\cite[Theorem~1.1]{Andrews1998},
\begin{equation}
  \frac{1}{(q;\,q)_k}
  \;=\; \sum_{\lambda\in P_k} q^{|\lambda|},
\end{equation}
where $P_k$ is the set of all partitions whose parts are each at most $k$.
On the other hand,
\begin{equation}
  \qbinom{m+k}{k}
  \;=\; \sum_{\lambda\in P_{m,k}} q^{|\lambda|},
\end{equation}
where $P_{m,k}$ is the set of partitions with at most $m$ parts, each of size at most $k$.

Since $P_{m,k}\subseteq P_k$, we have
\begin{equation}\label{eq:subset}
  \frac{1}{(q;\,q)_k} \;\geq_q\; \qbinom{m+k}{k}.
\end{equation}

\medskip
Combining \eqref{eq:lower-half} and~\eqref{eq:subset} yields \eqref{eq:int}.
\end{proof}

\section{Further examples}\label{sec:evidence}
We verify the $\frac{1}{2}$-conjecture for $r=1/3$ and $r=1/4$ in the range $1\leq k\leq 150$
with the help of a computer.

The following observation makes every computer check in this section a finite,
directly inspectable certificate.

\begin{lemma}[Prefix-sum certificate]\label{lem:prefix-sums}
Let $A(q)=\sum_{e=0}^{D}a_eq^e\in\mathbb Q[q]$.  Then
\begin{equation}\label{eq:prefix-sums}
  \frac{A(q)}{1-q}\geq_q0
  \quad\Longleftrightarrow\quad
  \sum_{e=0}^{d}a_e\geq0\quad\text{for every }0\leq d\leq D.
\end{equation}
If these equivalent conditions hold, then for every $k\geq1$,
\begin{equation}\label{eq:prefix-full-denominator}
  \frac{A(q)}{(q;q)_k}\geq_q0.
\end{equation}
\end{lemma}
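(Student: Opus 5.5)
The plan is to compute $A(q)/(1-q)$ explicitly as a power series and read off its coefficients. Since $A(q)\in\mathbb Q[q]$ has only integral exponents, $\TrZ$ acts as the identity on it and on $A(q)/(1-q)$. Expanding $1/(1-q)=\sum_{j\ge0}q^j$ gives
\[
  \frac{A(q)}{1-q}=\sum_{d\geq0}\Bigl(\sum_{e=0}^{\min(d,D)}a_e\Bigr)q^d .
\]
The coefficient of $q^d$ is the prefix sum $P_d:=\sum_{e\le d}a_e$ when $d\le D$. For $d\ge D$ it is the constant $P_D$. So $A(q)/(1-q)\geq_q 0$ holds exactly when $P_d\ge0$ for all $d\ge0$, and this reduces to the finitely many conditions $0\le d\le D$. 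That proves both directions of \eqref{eq:prefix-sums}.

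For \eqref{eq:prefix-full-denominator}, I will factor
\[
  \frac{A(q)}{(q;q)_k}=\frac{A(q)}{1-q}\cdot\frac{1}{(1-q^2)\cdots(1-q^k)} .
\]
The first factor is $\geq_q 0$ by the previous paragraph. When $k=1$ there is nothing more to show. When $k\ge2$, I apply Lemma~\ref{lem:order-props}\eqref{it:denom} with $F=A(q)/(1-q)$, $G=0$ and the integers $k_i=2,\dots,k$. Equivalently, the second factor lies in $\mathbb Q_{\ge0}[[q]]$, so Lemma~\ref{lem:trace-factor} shows that the product of two series with nonnegative coefficients has nonnegative coefficients.

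There is no real obstacle here. The only point needing care is noting that the coefficient sequence stabilizes at $P_D$ for $d\ge D$, which is what makes the infinite family of conditions a finite certificate.
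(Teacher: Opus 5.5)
Your proof is correct and follows the paper's argument essentially verbatim. In both, the coefficient of $q^d$ in $A(q)/(1-q)$ is the prefix sum $\sum_{e\le\min(d,D)}a_e$, which stabilizes for $d\ge D$, and the full denominator is handled by splitting off $\prod_{j=2}^{k}(1-q^j)^{-1}$, which has nonnegative coefficients. One small attribution slip: that a Cauchy product of nonnegative series is nonnegative is elementary and does not come from Lemma~\ref{lem:trace-factor}, but your other citation, Lemma~\ref{lem:order-props}\eqref{it:denom}, does cover this step.
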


\begin{proof}
For every $d\geq0$,
\[
 [q^d]\frac{A(q)}{1-q}=\sum_{e=0}^{\min(d,D)}a_e.
\]
For $d\geq D$ this coefficient is the total sum $\sum_{e=0}^{D}a_e$, so it is
enough to check the $D+1$ prefix sums in~\eqref{eq:prefix-sums}.  Finally,
\[
 \frac{A(q)}{(q;q)_k}=\frac{A(q)}{1-q}
 \prod_{j=2}^{k}\frac{1}{1-q^j},
\]
and every remaining factor has nonnegative coefficients.
\end{proof}

\begin{proposition}\label{prop:deno4}
For every integer $k$ with $1\leq k\leq 150$,
\begin{equation}
	\qbinom{1/2+k}{k} \;\geq_q\; \qbinom{1/4+k}{k}.
\end{equation}
\end{proposition}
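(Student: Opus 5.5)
The statement is a finite computer verification, so the plan is to reduce each $k$ to a finite certificate via Lemma~\ref{lem:prefix-sums}. First, by \eqref{eq:trace-through-denominator} and the support computations $S_{1/2}=\{0,2,4,\dots\}$, $S_{1/4}=\{0,4,8,\dots\}$, I write both integer traces over the common denominator $(q;q)_k$:
\[
\TrZ\qbinom{1/2+k}{k}-\TrZ\qbinom{1/4+k}{k}=\frac{H_k(q)}{(q;q)_k},
\qquad
H_k(q)=\sum_{\substack{0\le s\le k\\ 2\mid s}}q^{s(s+2)/2}\qbinom{k}{s}-\sum_{\substack{0\le s\le k\\ 4\mid s}}q^{s(s+1)/2+s/4}\qbinom{k}{s}.
\]
Here $E(s,1/4)=s(s+1)/2+s/4$, and all signs are $+1$ because every index in either support is even. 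The $s=0$ terms cancel. This makes $H_k(q)$ an explicit polynomial with integer coefficients of degree at most $k(k+2)/2$, and the statement for a given $k$ is exactly $H_k(q)/(q;q)_k\ge_q0$.

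Second, I would not try to prove $H_k\ge 0$ coefficientwise; it may fail, since the $1/4$ exponents differ from the $1/2$ ones. Instead I invoke Lemma~\ref{lem:prefix-sums}: it suffices that every prefix sum $\sum_{e\le d}[q^e]H_k$ is nonnegative for $0\le d\le\deg H_k$. Then \eqref{eq:prefix-full-denominator} gives the claim.

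The computation for each $1\le k\le150$ proceeds as follows.
\begin{enumerate}
\item Build the Gaussian coefficients $\qbinom{k}{s}$ exactly, in integer arithmetic, by the $q$-Pascal recurrence.
\item Assemble $H_k$ from them.
\item Check the prefix sums; there are at most about $11{,}400$ coefficients per $k$.
\end{enumerate}
Since everything is exact integer arithmetic, the output is an inspectable certificate.

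The main obstacle is a possible failure of the prefix-sum condition for some $k$. If $H_k$ has a negative prefix sum, I would fall back on stronger denominators. One option is to test $H_k/((1-q)(1-q^2))$, or the full $H_k/(q;q)_k$ truncated at a degree bound, using the fact that beyond degree $\deg H_k$ the coefficients of $H_k/(q;q)_j$ are governed by the leading nonnegative behavior. As a sanity check, I would also confirm that $H_k(1)=\sum_{2\mid s}\binom ks-\sum_{4\mid s}\binom ks\ge0$, which guarantees that the tail coefficients are nonnegative. I expect the plain prefix-sum test to succeed, consistent with Example~\ref{ex:k4-traces}, where $H_4=q^4+q^5+2q^6+q^7+q^8+q^{12}-q^{11}$ has nonnegative prefix sums.
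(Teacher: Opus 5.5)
Your plan is correct and is essentially the paper's proof. You derive the same numerator $H_k(q)$: your exponents $s(s+2)/2$ for $s=2\ell$ and $s(s+1)/2+s/4$ for $s=4\ell$ reduce to the paper's $2\ell(\ell+1)$ and $\ell(8\ell+3)$. You then certify $H_k(q)/(q;q)_k\geq_q 0$ for each $1\leq k\leq 150$ by the exact prefix-sum check of Lemma~\ref{lem:prefix-sums}, which the paper reports succeeds throughout that range. Your fallback options are therefore never needed; the truncation argument for the full quotient would also have required a rigorous tail bound.
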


\begin{proof}
By identity~\eqref{eq:numerator} with $r=1/2$ and $r=1/4$,
\begin{align*}
	&	\qbinom{1/2+k}{k} - \qbinom{1/4+k}{k} \\
	&= \frac{\displaystyle\sum_{j=0}^k(-1)^j q^{j/2+j(j+1)/2}\qbinom{k}{j}
        - \sum_{j=0}^k(-1)^j q^{j/4+j(j+1)/2}\qbinom{k}{j}}
       {(1-q)\cdots(1-q^k)}\\
	&=_q \frac{H_k(q)}{(1-q)(1-q^2)\cdots(1-q^k)},
\end{align*}
where
\begin{equation}\label{eq:Hk}
  H_k(q)
  \;=\;
  \sum_{\ell=1}^{\lfloor k/2\rfloor} q^{2\ell(\ell+1)}\qbinom{k}{2\ell}
  - \sum_{\ell=1}^{\lfloor k/4\rfloor} q^{\ell(8\ell+3)}\qbinom{k}{4\ell}.
\end{equation}

We list the values of $H_k(q)$ for $k=1,2,3,4$:
\begin{align*}
		H_1(q)&=0;\\
		H_2(q)&=q^4;\\
		H_3(q)&=q^4+q^5+q^6;\\
		H_4(q)&=q^4+q^5+2q^6+q^7+q^8-q^{11}+q^{12}.
\end{align*}
In the first three cases $H_k(q)$ has nonnegative coefficients, so
$\qbinom{1/2+k}{k} - \qbinom{1/4+k}{k} \geq_q 0$ for $k=1,2,3$
by Lemma~\ref{lem:order-props}\eqref{it:denom}.

For $k=4$, the polynomial $H_4(q)$ already contains the term $-q^{11}$, so
$H_4(q)$ itself is not coefficientwise nonnegative.  Nevertheless, its successive
coefficient sums are all nonnegative (the coefficient $-1$ in degree $11$ is
absorbed by the preceding positive mass).  Lemma~\ref{lem:prefix-sums} therefore
proves the required inequality for $k=4$.

For every $1\leq k\leq150$, our symbolic computation expands $H_k(q)$ with exact
integer coefficients and checks the finite certificate
\begin{equation}\label{eq:H-prefix-certificate}
  \sum_{e=0}^{d}[q^e]H_k(q)\geq0
  \qquad(0\leq d\leq\deg H_k).
\end{equation}
Lemma~\ref{lem:prefix-sums} converts~\eqref{eq:H-prefix-certificate} directly into
$H_k(q)/(q;q)_k\geq_q0$, which proves the proposition.  Thus the computation does
not rely on a heuristic pairing: the recorded prefix sums are an exact finite
certificate for every $k$ in the stated range.
\end{proof}

The corresponding code and data are available on the first author's personal webpage:
\begin{center}
\url{https://irma.math.unistra.fr/~guoniu/halfconj.html}
\end{center}
This allows for independent verification of the computations.

\begin{proposition}\label{prop:deno3}
For every integer $k$ with $1\leq k\leq 150$,
\begin{equation}
	\qbinom{1/2+k}{k} \;\geq_q\; \qbinom{1/3+k}{k}.
\end{equation}
\end{proposition}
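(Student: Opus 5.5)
The argument mirrors the proof of Proposition~\ref{prop:deno4}: reduce the comparison to a finite numerator polynomial, then check a prefix-sum certificate by computer for each $k$ in range.

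\textbf{Step 1: the numerator.} By \eqref{eq:numerator}, \eqref{eq:trace-through-denominator} and Lemma~\ref{lem:trace-factor},
\[
 \qbinom{1/2+k}{k}-\qbinom{1/3+k}{k} \;=_q\; \frac{U_k(q)}{(q;q)_k}.
\]
The integer support of $r=1/3$ is $S_{1/3}=\{0,3,6,\dots\}$. For $s=3\ell$ we have
\[
 E(3\ell,1/3)=\frac{3\ell(3\ell+1)}{2}+\ell=\frac{\ell(9\ell+5)}{2},
\]
which is indeed an integer. The term $s=0$ cancels against the $s=0$ term for $r=1/2$. Hence
\[
 U_k(q)=\sum_{\ell=1}^{\lfloor k/2\rfloor} q^{2\ell(\ell+1)}\qbinom{k}{2\ell}
 -\sum_{\ell=1}^{\lfloor k/3\rfloor}(-1)^{\ell} q^{\ell(9\ell+5)/2}\qbinom{k}{3\ell}.
\]
Terms with $\ell$ even in the second sum enter with a minus sign. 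Terms with $\ell$ odd enter positively, since $(-1)^{3\ell}=(-1)^\ell$.

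\textbf{Step 2: small $k$ by hand.} For $k=1,2$ we get $U_k=H_k$, which is nonnegative. For $k=3,4,5$ the only $r=1/3$ contribution is $+q^{7}\qbinom{k}{3}$, so $U_k$ has nonnegative coefficients. Lemma~\ref{lem:order-props}\eqref{it:denom} then settles these cases. For $k\ge 6$ the term $-q^{23}\qbinom{k}{6}$ appears, and $U_k$ can have negative coefficients.

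\textbf{Step 3: the certificate.} For each $1\le k\le 150$, expand $U_k(q)$ exactly with integer coefficients. Check
\[
 \sum_{e=0}^{d}[q^e]U_k(q)\ge 0 \quad\text{for all } 0\le d\le \deg U_k .
\]
Lemma~\ref{lem:prefix-sums} then gives $U_k(q)/(q;q)_k\ge_q 0$, which proves the proposition for that $k$. The degrees are at most about $k(k+3)/2$, so each check is a finite computation on polynomials of moderate size.

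\textbf{Main obstacle.} The only genuine risk is that a prefix sum becomes negative for some $k$. Two features make this unlikely, though neither is a proof. First, each negative block $q^{\ell(9\ell+5)/2}\qbinom{k}{3\ell}$ with $\ell$ even starts at degree at least $23$. Second, the positive mass $q^{2\ell(\ell+1)}\qbinom{k}{2\ell}$ accumulated by that degree is much larger. If some prefix sum did fail, I would fall back to dividing by $(1-q)(1-q^2)$ and checking the coefficients of $U_k/((1-q)(1-q^2))$ up to a computable degree. That certificate is still finite, because beyond degree $\deg U_k$ the coefficients are quasi-polynomial with eventually nonnegative leading behaviour.
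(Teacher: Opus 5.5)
Your proposal is correct and matches the paper's proof: the same reduction to $U_k(q)/(q;q)_k$ via the integral-factor rule, the same formula for $U_k$ with $E(3\ell,1/3)=\ell(9\ell+5)/2$, and the same exact prefix-sum check for $1\le k\le 150$ fed into Lemma~\ref{lem:prefix-sums}. Your hand treatment of $k\le 5$ and the fallback via $(1-q)(1-q^2)$ are harmless extras; the fallback is never needed, since the paper reports that the prefix sums are nonnegative for every $k$ in the range.
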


\begin{proof}
By identity~\eqref{eq:numerator} with $r=1/2$ and $r=1/3$,
\begin{align*}
	&	\qbinom{1/2+k}{k} - \qbinom{1/3+k}{k} \\
	& = \frac{\displaystyle\sum_{j=0}^k(-1)^j q^{j/2+j(j+1)/2}\qbinom{k}{j}
        - \sum_{j=0}^k(-1)^j q^{j/3+j(j+1)/2}\qbinom{k}{j}}
       {(1-q)(1-q^2)\cdots(1-q^k)}\\
	&=_q \frac{U_k(q)}{(1-q)(1-q^2)\cdots(1-q^k)},
\end{align*}
where
\begin{equation}\label{eq:Uk}
  U_k(q)
  \;=\;
  \sum_{\ell=1}^{\lfloor k/2\rfloor} q^{2\ell(\ell+1)}\qbinom{k}{2\ell}
  - \sum_{\ell=1}^{\lfloor k/3\rfloor} (-1)^\ell q^{\ell(9\ell+5)/2}\qbinom{k}{3\ell}.
\end{equation}

We list the values of $U_k(q)$ for $k=1,2,3,4$:
\begin{align*}
U_1(q)&=0;\\
U_2(q)&=q^4;\\
U_3(q)&=q^4+q^5+q^6+q^7;\\
U_4(q)&=q^4+q^5+2q^6+2q^7+2q^8+q^9+q^{10}+q^{12}.
\end{align*}
As in the proof of Proposition~\ref{prop:deno4}, the program uses exact integer
arithmetic to verify
\begin{equation}\label{eq:U-prefix-certificate}
  \sum_{e=0}^{d}[q^e]U_k(q)\geq0
  \qquad(0\leq d\leq\deg U_k)
\end{equation}
for every $1\leq k\leq150$.  Lemma~\ref{lem:prefix-sums} then gives
$U_k(q)/(q;q)_k\geq_q0$, proving the proposition.  The code and data are
available on the same webpage.
\end{proof}

We next explain the lower limit $k=19$ in the two stronger conjectures below.

\begin{remark}[The observed threshold $19$]\label{rem:threshold-19}
The stronger property in question is coefficientwise nonnegativity of the
\emph{numerator polynomial itself}, rather than nonnegativity only after division
by $1-q$.  For $1\leq k\leq18$, exact computation gives
\[
\begin{array}{c|c}
\text{polynomial} & \text{values of $k\leq18$ having a negative coefficient}\\ \hline
H_k(q) & 4,5,6,\ldots,18\\
U_k(q) & 6,8,10,12,18.
\end{array}
\]
Both $H_k(q)$ and $U_k(q)$ are coefficientwise nonnegative for every tested value
$19\leq k\leq150$.  Thus $19$ is the smallest observed point after which neither
sequence has another failure in the computed range.  This is an empirical threshold,
not a consequence of the prefix-sum proof: even at the exceptional smaller values,
the prefix sums in~\eqref{eq:H-prefix-certificate} and
\eqref{eq:U-prefix-certificate} are nonnegative, which is enough for the two
propositions.  The conjectures assert that the stronger numerator positivity seen
from $k=19$ onward persists for all larger $k$.
\end{remark}

Empirical evidence suggests the following two stronger statements.

\begin{conjecture}\label{conj:1/4:nu}
	For all $k\geq 19$, we have $H_k(q)\geq_q 0$, where $H_k(q)$ is defined by~\eqref{eq:Hk}.
\end{conjecture}

\begin{conjecture}\label{conj:1/3:nu}
	For all $k\geq 19$, we have $U_k(q)\geq_q 0$, where $U_k(q)$ is defined by~\eqref{eq:Uk}.
\end{conjecture}


\end{document}